\documentclass[12pt,letterpaper]{amsart}

\usepackage[a4paper,margin=27mm]{geometry}
\usepackage{amsmath,amssymb,amsthm}
\usepackage{enumitem}
\usepackage{graphicx}
\usepackage[colorlinks=true,linkcolor=blue,citecolor=blue,urlcolor=blue]{hyperref}

\allowdisplaybreaks

\newtheorem{theorem}{Theorem}[section]
\newtheorem{lemma}[theorem]{Lemma}
\newtheorem{remark}[theorem]{Remark}
\newtheorem{conjecture}[theorem]{Conjecture}
\newtheorem{proposition}[theorem]{Proposition}

\DeclareMathOperator{\Par}{Par}

\newcommand{\cE}{\mathcal E}
\newcommand{\one}{\mathbf 1}
\usepackage{xcolor}

\newcommand{\BPD}[2][1pc]{%
\setlength{\unitlength}{#1}
\def\BPDframe{%
    \thinlines
    \color{black!70}
    \put(0,0){\line(0,1){1}}
    \put(1,0){\line(0,1){1}}
    \put(0,0){\line(1,0){1}}
    \put(0,1){\line(1,0){1}}}%
\def\O{%
\begin{picture}(1,1)
    \BPDframe
\end{picture}}%
\def\PsiCell##1{%
  \begin{picture}(1,1)
    \put(0,0){\color{teal!45}\rule{\unitlength}{\unitlength}}
    \BPDframe
    \put(0,0.27){\makebox[\unitlength]{\color{black}\scriptsize \(##1\)}}
  \end{picture}}%
\def\CompCell##1{%
  \begin{picture}(1,1)
    \put(0,0){\color{orange!35}\rule{\unitlength}{\unitlength}}
    \BPDframe
    \put(0,0.27){\makebox[\unitlength]{\color{black}\scriptsize \(##1\)}}
  \end{picture}}%
\def\RootFaint{%
  \begin{picture}(1,1)
    \put(0,0){\color{gray!15}\rule{\unitlength}{\unitlength}}
    \BPDframe
  \end{picture}}%
  \def\DeltaCell##1{%
  \begin{picture}(1,1)
    \put(0,0){%
      \color{blue!28}%
      \rule{\unitlength}{\unitlength}}
    \BPDframe
    \put(0,0.27){%
      \makebox[\unitlength]{%
        \color{black}\scriptsize \(##1\)}}
  \end{picture}}
\def\RootFaint{%
  \begin{picture}(1,1)
    \put(0,0){%
      \color{gray!10}%
      \rule{\unitlength}{\unitlength}}
    \BPDframe
  \end{picture}}
  \def\RemCell##1{%
  \begin{picture}(1,1)
    \put(0,0){\color{red!25}\rule{\unitlength}{\unitlength}}
    \BPDframe
    \put(0,0.27){%
      \makebox[\unitlength]{%
        \color{red!70!black}\scriptsize \(##1\)}}
  \end{picture}}%
\begin{array}{@{\,}c@{\,}}{}
{\def\arraystretch{0}
\setlength{\arraycolsep}{0pt}
\begin{array}{@{}l@{}}#2\end{array}}
\end{array}}

\title{ $K$-$k$-Schur functions are  $k$-Schur positive}

\date{}

\author{Haojun Bai}
\address[Haojun Bai]{Center for Combinatorics, LPMC, 
Nankai University, Tianjin 300071, P.R. China}
\email{haojunbai@mail.nankai.edu.cn}

\author{Peter L. Guo}
\address[Peter L. Guo]{Center for Combinatorics, LPMC, 
Nankai University, Tianjin 300071, P.R. China}
\email{lguo@nankai.edu.cn}

\begin{document}

\begin{abstract}
The $k$-Schur functions (resp., $K$-$k$-Schur functions) are the symmetric function  representatives of Schubert classes in the homology (resp., $K$-homology) of the affine Grassmannian of type $A$. 
We prove  that every $K$-$k$-Schur function is a nonnegative integer linear combination in the basis of  $k$-Schur functions. This settles   a conjecture  by Lam, Schilling  and Shimozono. The proof relies  on realizations of $k$-Schur   and $K$-$k$-Schur functions as    Catalan and  Katalan functions, respectively. 
\end{abstract}

\maketitle

\noindent {\bf Keywords:} $k$-Schur function, $K$-$k$-Schur function, Catalan  function,  Katalan function, $k$-Schur positive.

\vspace{10pt}

\noindent {\bf AMS Classifications:} 05E05, 05E10, 05E14

\section{Introduction}

Throughout this paper, let $k$ be a positive integer. The $k$-Schur functions $s^{(k)}_{\lambda}(\mathbf{x})$, which are homogeneous symmetric functions  indexed by $k$-bounded   partitions $\lambda$ (namely, the largest part $\lambda_1\leq k$), were discovered by Lapointe,   Lascoux and   Morse   
\cite{LLM} in their attempt to strengthen the   
Macdonald positivity conjecture.\footnote{The original definition of a $k$-Schur function depends on an additional parameter $t$, and in this work we concentrate on  the $t=1$ case.}  Geometrically, Lam \cite{Lam} established that $k$-Schur functions are the  symmetric function representatives of Schubert classes in the homology
of the affine Grassmannian of type $A$. For combinatorial constructions of 
$s^{(k)}_{\lambda}(\mathbf{x})$, 
see for example \cite{LLMS1,LM,LM1}. We also refer to the monograph
\cite{Survey} for a systematic treatment of $k$-Schur functions.

From a different perspective,   Chen and   Haiman \cite{Chen} conjectured
that $k$-Schur functions are a subclass of a family of symmetric functions, called Catalan functions,  which appear as the Euler
characteristics of  vector bundles on  the flag variety    \cite{Chen,Panyushev}. 
This prediction was confirmed  by  Blasiak, Morse,  Pun  and  Summers \cite[Theorem 2.4]{BMPS}.

There is a parallel story on the $K$-theoretic analogue of $k$-Schur functions.  Lam, Schilling  and
Shimozono \cite{LSS} constructed the symmetric functions   $g^{(k)}_\lambda(\mathbf{x})$, now called $K$-$k$-Schur functions,  and
identified them with Schubert classes in the  $K$-homology of the  affine Grassmannian of type $A$ \cite[Theorem 7.17]{LSS}.  The $K$-$k$-Schur function  $g^{(k)}_\lambda(\mathbf{x})$ is inhomogeneous, and its highest-degree  homogeneous
component is equal to the $k$-Schur function  $s^{(k)}_\lambda(\mathbf{x})$ \cite[Lemma 7.15]{LSS}.
A combinatorial model of $g^{(k)}_\lambda(\mathbf{x})$ based on  affine set-valued tableaux was found by Morse \cite{Morse}. 
As a simultaneous  extension of $k$-Schur  and Catalan functions, 
 Blasiak,  Morse  and  Seelinger \cite{BMS} introduced a family of symmetric functions, called   Katalan functions, with remarkable properties.

Our aim  is to prove that every $K$-$k$-Schur function is  $k$-Schur  positive, resolving a conjecture posed   by 
Lam, Schilling  and Shimozono  \cite[Conjecture 7.20(i)]{LSS}.

\begin{theorem}[\text{\cite[Conjecture 7.20(i)]{LSS}}]\label{thm:LSS}
For every $k$-bounded partition $\lambda$,
\[
 g^{(k)}_\lambda({\mathbf x})=
\sum_{\nu}b^{(k)}_{\lambda,\nu}\, s^{(k)}_\nu(\mathbf x),
 \qquad \text{where $b^{(k)}_{\lambda,\nu}\in\mathbb{Z}_{\ge0}$}.
\]
  
\end{theorem}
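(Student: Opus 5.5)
The plan is to work entirely inside the raising-operator framework. On one side we have the Blasiak--Morse--Pun--Summers realization of $s^{(k)}_\lambda$ as the Catalan function $H(\Psi;\lambda)$. On the other side we have the Blasiak--Morse--Seelinger realization of $g^{(k)}_\lambda$ as a Katalan function. Concretely, for the $k$-Schur root ideal $\Delta^k(\lambda)$, with $\ell=\ell(\lambda)$,
\[
g^{(k)}_\lambda=K(\Delta^k(\lambda);\Delta^{k+1}(\lambda);\lambda)=\prod_{i<j}\frac{1-R_{ij}}{\;}\cdots,
\]
which is schematically the Catalan function for $\Delta^k(\lambda)$ twisted by the extra factor $\prod_{(i,j)\in \Delta^{k+1}(\lambda)}(1-L_j)$. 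Here $L_j$ lowers the $j$th index of $g_\gamma$, the dual stable Grothendieck / $K$-theoretic $h$-basis. Expanding this product over subsets gives a signed sum, so positivity must come from reorganization rather than naive expansion.

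The key steps, in order, are as follows.

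\textbf{Step 1.} Rewrite $K(\Psi;M;\gamma)$ in the basis of ordinary Catalan functions with shifted weights:
\[
K(\Psi;M;\gamma)=\sum_{S}(\pm)\,H(\Psi;\gamma-\epsilon_S),
\]
using that $g_\gamma=\sum_{T}\binom{\cdot}{\cdot}(-1)^{\cdot}h_{\gamma-\epsilon_T}$, i.e.\ $g$ in terms of $h$ via $\prod_i(1-L_i)^{i-1}$-type factors. The goal is to obtain a closed formula $g_\lambda^{(k)}=\sum_{\mu}c_\mu H(\Psi_\mu;\mu)$, where the coefficients $c_\mu$ are nonnegative after cancellation. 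I would first try to show that the negative terms of $(1-L_j)$ cancel against the $(1-L)$ factors hidden in the $g\to h$ change of basis, leaving a product of factors of the form $\sum_{m\ge0}L^m$ or $(1+L)$-type restricted by the root ideal.

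\textbf{Step 2.} Identify each surviving term $H(\Psi';\mu)$, where $\mu=\lambda-\epsilon_S$ is possibly a non-partition weight and $\Psi'$ is a modified ideal. Show it is either zero or equal to $\pm s^{(k)}_\nu$, or a positive sum of $k$-Schur functions. For this I would use the BMPS toolkit: the straightening relations for Catalan functions, the ``root expansion'' and ``bounce-path'' lemmas, and the $k$-Schur branching / $k$-rectangle factorization. These results show that Catalan functions $H(\Delta^k(\mu);\mu)$ with suitably shaped ideals are $k$-Schur positive, and that they vanish or straighten when $\mu$ fails to be a partition.

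\textbf{Step 3.} Organize the terms by a sign-reversing involution on the index sets $S$. Its fixed points should correspond to a positive combinatorial object (a kind of affine set-valued marking), each contributing $+s^{(k)}_\nu$.

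The main obstacle is Step 3 together with the non-partition weights in Step 2. Straightening $H(\Psi;\mu)$ for $\mu$ not weakly decreasing introduces signs, so the involution must be compatible with straightening. I would try an induction on $\ell(\lambda)$: peel off the first row via the Katalan analogue of the BMPS ``$k$-Schur branching'' identity $K(\dots;\lambda)=\sum$ (lowering of the first row), and verify that the $K$-theoretic correction terms are again Katalan functions of the same type for smaller $\lambda$, hence $k$-Schur positive by induction. The ideal bookkeeping in this branching identity (keeping the ideals of $\Delta^k$ and $\Delta^{k+1}$ type after removing a row) is where I expect the real work.
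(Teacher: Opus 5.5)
Your Step 1 has the right opening move, but there is a genuine gap after it. Combining the factors $(1-L_j)$ from $M(\Delta^{k+1}(\lambda))$ with the change of basis $k_\eta=\prod_j(1-L_j)^{-(j-1)}h_\eta$ gives
\[
g^{(k)}_\lambda=\prod_j(1-L_j)^{-c_j}H(\Delta^k(\lambda);\lambda),\qquad c_j=j-1-m_j\ge 0 .
\]
(The exponent is negative, and $k_\eta$ is not the dual stable Grothendieck basis.) Here $m_j$ is the number of roots of $\Delta^{k+1}(\lambda)$ in column $j$. Nothing after this is yet an argument.

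If you expand this series into Catalan functions $H(\Delta^k(\lambda);\lambda-d)$, the root ideal no longer matches the weight, so the BMPS straightening for $H(\Delta^k(\mu);\mu)$ does not apply. Individual terms are then genuinely negative. For $\ell=3$, $\lambda=(2,2,2)$ and $k\ge 4$, both ideals are empty and $c=(0,1,2)$; the term $d=(0,2,0)$ contributes $s_{(2,0,2)}=-s_{(2,1,1)}$. So some cancellation mechanism is indispensable. Your Step 3 involution, with unspecified ``affine set-valued'' fixed points, is exactly the missing content.

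The fallback induction on $\ell(\lambda)$ via a Katalan branching identity is also only a hope. No such identity is formulated, and the claim that the correction terms are again Katalan functions of a positive type is the delicate point, not a routine check. The paper's concluding section shows that the natural family $K(\Psi;\mathrm{RC}(\Psi);\lambda)$ with $\mathrm{maxband}(\Psi,\lambda)\le k$ is \emph{not} $k$-Schur positive.

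The paper avoids cancellation entirely by strengthening the statement so that it closes under a manifestly positive recursion. You are missing two ingredients. First, $c$ is weakly increasing: $c_{j+1}-c_j=1-\#\{i<j\colon k+1-\lambda_i+i=j\}\in\{0,1\}$ because the $\lambda_i-i$ are distinct. Second, one proves $k$-Schur positivity of every coefficient $[u^r]\prod_j(1-uL_j)^{-c_j}H(\Delta^k(\gamma);\gamma)$ for all $\gamma\in\widetilde{\mathrm{Par}}^k_\ell$ (entries $\le k$, ascents of size at most one) and all weakly increasing $c$. The proof is by induction on $(r,|c|,N(\gamma))$.

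At a carefully chosen ascent $b$ (safe bounce path from $b$, plus the predecessor condition), there are two cases:
\begin{enumerate}
\item If $c_{b-1}=c_b$, the multiplier is $\tau_{b-1}$-invariant. A Laurent version of the BMPS local straightening rule then gives either $0$ or the same expression at $\mu=\gamma+\epsilon_a-\epsilon_b$, with $N(\mu)<N(\gamma)$.
\item If $c_b>c_{b-1}$, one peels off a single factor via $Q_c=Q_{c-\epsilon_b}+uz_b^{-1}Q_c$. One then applies the bounce-path identity $\cE_{Qz_b^{-1}}(\Delta^k(\gamma);\gamma)=\sum_{v\in\mathrm{Path}_\gamma(b)}\cE_Q(\Delta^k(\gamma-\epsilon_v);\gamma-\epsilon_v)$. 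This re-matches root ideal and weight, and stays in $\widetilde{\mathrm{Par}}^k_\ell$ because the path is safe.
\end{enumerate}
Both moves keep $c$ weakly increasing. Lowering operators are thus applied one at a time rather than by full expansion, and no involution is needed. When $\gamma$ is a partition and $r>0$, one peels at any $b$ with $c_b>c_{b-1}$.
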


\begin{remark}
Taking large $k$ (for example, $k$ is larger than the size of $\lambda$),  $ g^{(k)}_\lambda(\mathbf x)$ equals the dual stable Grothendieck polynomial $ g_{\lambda}(\mathbf x)$ \cite[Corollary 2.9]{BMS}, while   $ s^{(k)}_\lambda(\mathbf x)$ equals  the classical Schur function  $ s_{\lambda}(\mathbf x)$ \cite[Property  39]{LM1}. So Theorem \ref{thm:LSS} implies the Schur positivity of  $ g_\lambda(\mathbf x)$, which was first  established by Lam and   Pylyavskyy \cite[Theorem 9.8]{LP}, see also  Guo, Kang and Liu \cite[Theorem 1.3]{GKL} for a vast generalization.   
\end{remark}

\begin{remark}\label{rek-0-k}
It was conjectured by Blasiak,   Morse  and Seelinger    \cite[Conjecture 2.19]{BMS} that  Theorem \ref{thm:LSS} holds for a broad class  of Katalan functions. In Section \ref{sec-f-9}, we show that there exist  counterexamples to this conjecture. 
\end{remark}

The basic idea in the proof of Theorem \ref{thm:LSS} is to utilize  the realization  $ s^{(k)}_\lambda(\mathbf x)=H(\Delta^k(\lambda);\lambda)$  \cite[Theorem 2.4]{BMPS} as a Catalan function  and the realization  \(
 g^{(k)}_\lambda(\mathbf x)=
 K(\Delta^k(\lambda);\Delta^{k+1}(\lambda);\lambda)
\) \cite[Theorem 2.6]{BMS} as a     Katalan function,  see Section \ref{ssec2} for detailed explanations. To achieve our goal, we need to work in a broader setting 
 \[
\widetilde{\mathrm{Par}}_\ell^k =
 \{\gamma\in\mathbb{Z}^\ell:\gamma_i\le k\text{ and }
       \gamma_{i+1}-\gamma_i\le1\text{ for }i<\ell\},
\] 
which contains $k$-bounded partitions of length at most $\ell$ as a subclass. Actually, we shall prove a stronger $k$-Schur positivity in Theorem \ref{thm:main}, whose proof borrows ideas from the breakthrough work by  Blasiak,  Morse,  Pun and  Summers \cite{BMPS}.

This paper is structured  as follows. In  Section \ref{ssec2}, we review the definitions of Catalan and Katalan functions, as well as their specializations to $k$-Schur and $K$-$k$-Schur functions. In Section \ref{xhn-0-9}, we  give a stronger $k$-Schur positivity  result, see Theorem \ref{thm:main}, based on which we shall finish the proof of 
Theorem \ref{thm:LSS}.  Section \ref{se-0-87} is devoted to a proof of  Theorem \ref{thm:main}. 
In Section \ref{sec-f-9}, we exhibit counterexamples to the conjecture by  Blasiak,   Morse  and Seelinger    \cite[Conjecture 2.19]{BMS}.

\subsection*{Acknowledgments}
This work was supported by the National Natural Science Foundation of China (No. 12371329) and the Fundamental Research Funds for the Central Universities (No. 63263094).

\subsection*{AI disclosure}
Generative AI was used interactively as a technical assistant for checking calculations and proof strategies proposed by the authors. The counterexample given in Section \ref{sec-f-9} was found by ChatGPT-6 Astra.
All resulting material was reviewed and  verified by the authors.

\section{Catalan and Katalan functions}\label{ssec2}

In this section, we give an overview of   Catalan and Katalan functions, and explain how they specialize to $k$-Schur and $K$-$k$-Schur  functions, respectively.

\subsection{Catalan functions and $k$-Schur functions}

Catalan functions appear  naturally as $\mathrm{GL}_\ell(\mathbb C)$-equivariant Euler
characteristics of certain vector bundles on  the flag variety    \cite{Chen,Panyushev}. Here  we adopt the algebraic description from \cite{BMPS}. 
Let
\[
 \Delta^+_\ell=\{(i,j)\colon 1\le i<j\le\ell\}
\]
be the set of labels indexing the  positive roots of type $A_{\ell-1}$. We simply call $(i,j)$ a root of $ \Delta^+_\ell$.  The partial order on $ \Delta^+_\ell$ is defined as follows:  $(i,j)\preceq(p,q)$ when 
$p\le i<j\le q$.  A \emph{root ideal} is an upper order ideal
$\Psi\subseteq\Delta^+_\ell$, which means if $(i,j)\in \Psi$ and $(i,j)\preceq(p,q)$, then  $(p,q)\in \Psi$. Notice that  the complement $\Delta^+_\ell\setminus \Psi$ is a lower order ideal of $\Delta^+_\ell$.

In Figure \ref{fig:root-ideal-example}, we  illustrate   a root ideal $\Psi\subseteq\Delta^+_\ell$ for $\ell=6$ and its complement $\Delta^+_\ell\setminus \Psi$, which are indicated  by  $\BPD[3mm]{\PsiCell{}} $ and $\BPD[3mm]{\CompCell{}}$, respectively.
Here the root $(i,j)$ is put in the box in row $i$ and column $j$ (rows are counted from top to bottom, and columns are counted from left to right). 
\begin{figure}[h t]
\centering

\[
\begin{array}{c@{\qquad\qquad}c}

\BPD[5mm]{%
  \O\O\PsiCell{1,3}\PsiCell{1,4}\PsiCell{1,5}\PsiCell{1,6}\\
  \O\O\O\O\PsiCell{2,5}\PsiCell{2,6}\\
  \O\O\O\O\PsiCell{3,5}\PsiCell{3,6}\\
  \O\O\O\O\PsiCell{4,5}\PsiCell{4,6}\\
  \O\O\O\O\O\O\\
  \O\O\O\O\O\O
}
&
\BPD[5mm]{%
  \O\CompCell{1,2}\RootFaint\RootFaint\RootFaint\RootFaint\\
  \O\O\CompCell{2,3}\CompCell{2,4}\RootFaint\RootFaint\\
  \O\O\O\CompCell{3,4}\RootFaint\RootFaint\\
  \O\O\O\O\RootFaint\RootFaint\\
  \O\O\O\O\O\CompCell{5,6}\\
  \O\O\O\O\O\O
}
\end{array}
\]

\caption{A root ideal $\Psi$ in $\Delta_6^+$ and its complement
$\Delta_6^+\setminus\Psi$.}
\label{fig:root-ideal-example}
\end{figure}

For $d\geq 1$, let 
\[
h_d(\mathbf{x})=\sum_{i_1\leq \cdots\leq i_d}x_{i_1}\cdots x_{i_d}.
\]
We   use the convention that $h_0(\mathbf{x})=1$ and $h_d(\mathbf{x})=0$ for $d<0$. For an integer vector
$\eta\in\mathbb{Z}^\ell$, write \[h_\eta(\mathbf{x})=\prod_i h_{\eta_i}(\mathbf{x}).\] 
For a root ideal $\Psi\subseteq\Delta^+_\ell$ and a vector $\eta\in\mathbb{Z}^\ell$, the associated  Catalan function   can be defined by
\begin{equation}\label{Cat-1}
 H(\Psi;\eta)=
 \prod_{(i,j)\in\Delta^+_\ell\setminus\Psi}(1-R_{ij})\,h_\eta,
\end{equation}
see \cite[Corollary 4.8]{BMPS}.
Here $ R_{ij}$ is the raising operator acting on the subscripts   by
\[
R_{ij}\, \eta ={\eta+\epsilon_i-\epsilon_j},
\]
where $\epsilon_i\in \mathbb{Z}^\ell$ is the $i$-th standard coordinate vector.

Catalan functions contain $k$-Schur functions as special cases. 
Recall that a  {partition of length at most $\ell$} is a weakly decreasing vector
$\lambda=(\lambda_1,\ldots,\lambda_\ell)$ of nonnegative integers. We say that $\lambda$  is  {$k$-bounded} if $\lambda_1\le k$.
Write $\Par^k$ for the set
of $k$-bounded partitions, and  $\Par^k_\ell$ for the set of $k$-bounded partitions of length at most $\ell$. 
The
set $\{s^{(k)}_\lambda(\mathbf{x}):\lambda\in\Par^k\}$ of $k$-Schur functions  forms a basis of
the subalgebra
\[
 \Lambda_{(k)}=\mathbb{Z}[h_1,\ldots,h_k]
\]
 of symmetric functions.

For $\lambda\in {\mathrm{Par}}_\ell^k$, define a root ideal as follows \cite[Definition 2.2]{BMPS}:
\begin{equation}\label{ideal=09}
 \Delta^k(\lambda)=
 \{(i,j)\in\Delta^+_\ell\colon k-\lambda_i+i<j\}.
\end{equation}  
See Remark \ref{rek-0-8} for a concrete description of $ \Delta^k(\lambda)$.
Figure \ref{fig:Delta-k-example} depicts  the root ideal  $\Delta^k(\lambda)$ for $k=4$, $\ell=6$ and 
$\lambda=(4,3,2,2,1,0)$. 
\begin{figure}[h t]
\centering
\[
\BPD[5mm]{%
  \O\DeltaCell{1,2}\DeltaCell{1,3}\DeltaCell{1,4}\DeltaCell{1,5}\DeltaCell{1,6}\\
  \O\O\RootFaint\DeltaCell{2,4}\DeltaCell{2,5}\DeltaCell{2,6}\\
  \O\O\O\RootFaint\RootFaint\DeltaCell{3,6}\\
  \O\O\O\O\RootFaint\RootFaint\\
  \O\O\O\O\O\RootFaint\\
  \O\O\O\O\O\O
}
\]

\caption{The root ideal $\Delta^4(\lambda)$ for
$\lambda=(4,3,2,2,1,0)$.}
\label{fig:Delta-k-example}
\end{figure}

It was conjectured by Chen and   Haiman \cite{Chen} and  proved by Blasiak, Morse,  Pun  and  Summers  \cite[Theorem 2.4]{BMPS} that  for $\lambda\in\Par^k_\ell$,
\begin{equation}\label{cdj-0-kl}
s^{(k)}_\lambda(\mathbf{x})=H(\Delta^k(\lambda);\lambda).
\end{equation}

\subsection{Katalan functions and $K$-$k$-Schur functions}

We proceed to review the Katalan functions introduced in \cite{BMS}. 
For $\eta=(\eta_1,\ldots,\eta_\ell)\in\mathbb{Z}^\ell$, set
\[
 k_\eta
 =
 k_{\eta_1}^{(0)}\,k_{\eta_2}^{(1)}\cdots
 k_{\eta_\ell}^{(\ell-1)},
\]
where $k_m^{(r)}=0$ for $m<0$, while for $m\ge0$,
\[
 k_m^{(r)}
 =
 \begin{cases}
  h_m,&r=0,\\[5pt]
  \displaystyle\sum_{d=0}^{m}\binom{r+d-1}{d}h_{m-d},&r\ge1.
 \end{cases}
\]  
Given  a root ideal $\Psi\subseteq\Delta^+_\ell$ and   a finite multiset $M$ on $\{1,\ldots,\ell\}$, the Katalan function can be  defined by (see \cite[Proposition 2.2]{BMS}):
\begin{equation}\label{xbb-0}
 K(\Psi;M;\eta)=
 \prod_{j\in M}(1-L_j)
       \prod_{(i,j)\in\Delta^+_\ell\setminus\Psi}(1-R_{ij})\,k_\eta.
\end{equation}
Here the raising operator  $R_{ij}$ is  as defined above, and $L_j$  is the  lowering  operator  acting  on the subscripts by
\[
 \qquad L_j \, \eta={\eta-\epsilon_j}.
\] 

The multisets we encounter often come from root ideals of $\Delta^+_\ell$. 
 For any root ideal \(\Phi\subseteq\Delta^+_\ell\), write 
\begin{equation}\label{xb-9-xn}
M(\Phi)=\bigsqcup_{(i,j)\in\Phi}\{j\}. 
\end{equation}
For  root ideals $\Psi, \Phi\subseteq\Delta^+_\ell$ and $\eta\in \mathbb{Z}^\ell$, 
set
\[
K(\Psi;\Phi;\eta):=K(\Psi;M(\Phi);\eta).
\]
The Catalan and Katalan functions are connected via the following relation \cite[Proposition 2.3]{BMS}:
\[
H(\Psi; \eta)=K(\Psi;\Delta_\ell^+;\eta).
\]
Moreover, 
Blasiak, Morse  and Seelinger \cite[Theorem 2.6]{BMS}  showed that  for $\lambda\in\Par^k_\ell$, 
\begin{equation*}\label{eq:BMSintro}
 g^{(k)}_\lambda(\mathbf{x})=
 K(\Delta^k(\lambda);\Delta^{k+1}(\lambda);\lambda).
\end{equation*}

\section{Proof of Theorem \ref{thm:LSS}}\label{xhn-0-9}

In this section, we shall  state a stronger result than 
Theorem \ref{thm:LSS}, see Theorem \ref{thm:main},   whose proof will be given in Section \ref{se-0-87}. 
As a direct consequence of Theorem \ref{thm:main}, we arrive at  a  proof of Theorem \ref{thm:LSS}.

To describe Theorem \ref{thm:main}, we work in the following broader  setting 
than  $\Par^k_\ell$:
 \[
\widetilde{\mathrm{Par}}_\ell^k =
 \{\gamma\in\mathbb{Z}^\ell\colon \gamma_i\le k\text{ and }
       \gamma_{i+1}-\gamma_i\le1\text{ for }i<\ell\}.
\] 
which has been investigated  in \cite[Section 7]{BMPS}. 
Clearly,  $\Par^k_\ell\subseteq \widetilde{\mathrm{Par}}_\ell^k$. 
In the same way as in \eqref{ideal=09},
for every $\gamma\in\widetilde{\mathrm{Par}}_\ell^k$, we may define a root ideal \cite[Definition 7.1]{BMPS}:
\[
 \Delta^k(\gamma)=
 \{(i,j)\in\Delta^+_\ell\colon k-\gamma_i+i<j\}.
\]

The following remark will be frequently used in the remainder.

\begin{remark}\label{rek-0-8}
For  $\gamma\in\widetilde{\mathrm{Par}}_\ell^k$, there is a concrete description (essentially a restatement) for the root ideal $ \Delta^k(\gamma)$ \cite[Proposition 7.3]{BMPS}.    Specifically,
set
\begin{equation*}\label{eq:rho}
\rho_i(\gamma)=k+1-\gamma_i+i,
 \qquad \rho_{\ell+1}=+\infty.
\end{equation*}
Notice that for $1\leq i\leq \ell$, 
we have $\rho_i(\gamma)>i$ and 
\[
 \rho_{i+1}(\gamma)-\rho_i(\gamma)=1-(\gamma_{i+1}-\gamma_i)\ge0.
\]
Then, for $1\leq i\leq \ell$, the $i$-th row of $\Delta^k(\gamma)$ begins with the root $(i, \rho_i(\gamma))$ if $\rho_i(\gamma)\le\ell$. 
\end{remark}
 
In  Figure  \ref{fig:Delta-k-nonpartition}, we  illustrate the root ideal $\Delta^k(\gamma)$ with $k=4$, $\ell=6$ and $\gamma=(3,4,2,3,1,0)$.
Notice that 
\[
(\rho_1(\gamma),\ldots, \rho_\ell(\gamma))=(3,3,6,6,9,11).
\]
\begin{figure}[h t]
\centering

\[
\BPD[5mm]{%
  \O\RootFaint
    \DeltaCell{1,3}\DeltaCell{1,4}
    \DeltaCell{1,5}\DeltaCell{1,6}\\
  \O\O
    \DeltaCell{2,3}\DeltaCell{2,4}
    \DeltaCell{2,5}\DeltaCell{2,6}\\
  \O\O\O
    \RootFaint\RootFaint\DeltaCell{3,6}\\
  \O\O\O\O
    \RootFaint\DeltaCell{4,6}\\
  \O\O\O\O\O
    \RootFaint\\
  \O\O\O\O\O\O
}
\]

\caption{The root ideal $\Delta^4(\gamma)$ for $\gamma=(3,4,2,3,1,0)\in
\widetilde{\mathrm{Par}}_6^4$. }
\label{fig:Delta-k-nonpartition}
\end{figure}

For
$\gamma\in\widetilde{\mathrm{Par}}_\ell^k$ and
$c=(c_1,\ldots,c_\ell)\in\mathbb{Z}_{\ge0}^\ell$, define
\begin{align}\label{eq:Fintro}
 F_{\gamma,c}(u)&=
 \prod_{j=1}^{\ell}(1-uL_j)^{-c_j}
 \,H(\Delta^k(\gamma);\gamma)\nonumber\\
&= \prod_{j=1}^{\ell}(1-uL_j)^{-c_j}\prod_{(i,j)\in\Delta^+_\ell\setminus\Delta^k(\gamma)}(1-R_{ij})\,h_\gamma\in \Lambda[[u]],
\end{align}
where 
\[
(1-uL_j)^{-c_j}=\sum_{d\geq 0} {c_j+d-1 \choose d} L_j^d\, u^d.
\]
Note that since $L_j$ decreases the subscripts strictly, $F_{\gamma,c}(u)$  actually belongs to $\Lambda[u]$. 

Our main result is  the following   $k$-Schur positivity  statement.

\begin{theorem}\label{thm:main}
Let $\gamma\in\widetilde{\mathrm{Par}}_\ell^k$, and let
$c=(c_1\le\cdots\le c_\ell)\in \mathbb{Z}_{\geq 0}^\ell$ be weakly increasing.  For every $r\ge0$,
\[
 [u^r]F_{\gamma,c}(u)
 \in\sum_{\nu\in\Par^k_\ell}\mathbb{Z}_{\ge0}\cdot s^{(k)}_\nu.
\]
\end{theorem}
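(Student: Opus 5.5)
Writing \(\mathcal{L}_j=(1-uL_j)^{-1}=\sum_{d\ge0}u^dL_j^d\), we have
\[
F_{\gamma,c}(u)=\prod_j\mathcal{L}_j^{c_j}\,H(\Delta^k(\gamma);\gamma).
\]
The plan is induction, modelled on the proof of \cite[Theorem 2.4]{BMPS} and its extension to \(\widetilde{\Par}^k_\ell\) in \cite[Section 7]{BMPS}. The outer induction is on \(|c|=\sum_j c_j\); the inner one is on \(\ell\) together with a suitable order on \(\gamma\), say decreasing \(\sum_i \gamma_i\) with a lexicographic tiebreak. That inner order is the one used in \cite{BMPS} to straighten \(H(\Delta^k(\gamma);\gamma)\) into \(k\)-Schur functions.

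The base case \(c=0\) is exactly the statement from \cite[Section 7]{BMPS}. There, for \(\gamma\in\widetilde{\Par}^k_\ell\), the Catalan function \(H(\Delta^k(\gamma);\gamma)\) is either \(0\) or equal to \(s^{(k)}_\nu\) for some \(\nu\in\Par^k_\ell\). This follows from their straightening rules: the "k-shift" and "rotation" lemmas, and the Pieri-type relation \(H(\Psi;\gamma)=H(\Psi\setminus\{\alpha\};\gamma)-H(\Psi;\gamma+\epsilon_i-\epsilon_j)\) for removable roots.

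For the inductive step, the key observation is that each \(\mathcal{L}_j\) commutes with all raising operators. We can therefore expand one factor via
\[
\mathcal{L}_j=1+uL_j\mathcal{L}_j.
\]
I would peel off the factor for the last index \(j\) with \(c_j>0\), using the fact that \(c\) is weakly increasing, so this is \(j=\ell\) unless \(c_\ell=0\). This gives
\[
F_{\gamma,c}=F_{\gamma,c-\epsilon_\ell}+u\prod_j \mathcal{L}_j^{c_j}\,H(\Delta^k(\gamma);\gamma-\epsilon_\ell).
\]
The first term is handled by induction on \(|c|\). For the second term, \(\gamma-\epsilon_\ell\) stays in \(\widetilde{\Par}^k_\ell\) (lowering the last entry cannot break \(\gamma_{i+1}-\gamma_i\le1\)). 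However, its root ideal \(\Delta^k(\gamma-\epsilon_\ell)\) differs from \(\Delta^k(\gamma)\): it has one more root removed in row \(\ell\), which is vacuous, so in fact the row-\(\ell\) change is empty. Hence \(H(\Delta^k(\gamma);\gamma-\epsilon_\ell)=H(\Delta^k(\gamma-\epsilon_\ell);\gamma-\epsilon_\ell)\), and we recover the same shape of expression with \(\gamma\) decreased.

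The general case peels an index \(j<\ell\). Lowering \(\gamma_j\) may violate \(\gamma_{j+1}-\gamma_j\le1\), and it changes row \(j\) of the ideal: \(\rho_j\) increases by one. In that case I would rewrite using the ideal relation. Removing the root \((j,\rho_j)\) from \(\Delta^k(\gamma)\) expresses \(H(\Delta^k(\gamma);\gamma-\epsilon_j)\) as \(H(\Delta^k(\gamma-\epsilon_j);\gamma-\epsilon_j)\) plus a term involving \(R_{j\rho_j}\). That extra term then needs to be recognised, after applying the \(\mathcal{L}\)'s, as a sum of \(F\)'s with smaller parameters, or as something that cancels.

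Here the monotonicity \(c_1\le\cdots\le c_\ell\) should be essential. The operator \(R_{j\rho_j}\) moves a unit from column \(\rho_j\) to row \(j\), and since \(c_{\rho_j}\ge c_j\), one can use the identity
\[
\mathcal{L}_{\rho_j}^{c_{\rho_j}}(1-R_{j\rho_j})=\cdots
\]
to absorb the correction into an \(F_{\gamma',c'}\) with \(c'\) still weakly increasing. Concretely, I expect to use \(L_{\rho_j}R_{j\rho_j}=L_{\rho_j}^{2}\,\cdot\) shifts, and to telescope via \(\mathcal{L}_{\rho}-\mathcal{L}_{j}\)-type differences.

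The main obstacle is this bookkeeping step: proving a recursion of the form
\[
F_{\gamma,c}=\sum(\text{nonnegative coefficients in }u)\,F_{\gamma',c'}
\]
in which every \(\gamma'\) lies in \(\widetilde{\Par}^k_\ell\), every \(c'\) is weakly increasing, and every term is strictly smaller in the induction order. I would first work out small cases (\(\ell=2,3\)) to guess the right recursion. Possibly one should peel \(\mathcal{L}_1\) first rather than \(\mathcal{L}_\ell\), with monotonicity guaranteeing that the excess factors at larger indices cover the corrections. Vanishing terms, where \(H=0\) by the \(k\)-shift lemma, must also be tracked, so that no negative term survives.
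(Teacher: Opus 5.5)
Your proposal has a genuine gap. The recursion the whole argument rests on is never produced, and the one concrete choice you make does not work.

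Peeling $\mathcal{L}_\ell$ leaves $F_{\gamma,c-\epsilon_\ell}$. But $c-\epsilon_\ell$ is weakly increasing only when $c_\ell>c_{\ell-1}$. Otherwise the hypothesis of the theorem is lost, the term can genuinely fail to be positive, and ``handled by induction on $|c|$'' is false. For example, take $\ell=2$, $k\ge 3$, $\gamma=(2,2)$, $c=(1,1)$, so $\Delta^k(\gamma)=\emptyset$. Then $F_{\gamma,(1,1)}=s_{22}+u\,s_{21}+u^2s_2$ is positive. Yet your first term $F_{\gamma,(1,0)}=\mathcal{L}_1(1-R_{12})h_{(2,2)}$ has $u^2$-coefficient $h_2-h_1^2=-s^{(k)}_{11}$.

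One has to peel at an ascent of $c$, i.e.\ an index $b$ with $c_b>c_{b-1}$ (where $c_0=0$). For $c=(1,\ldots,1)$ this forces $b=1$. So the case you defer as ``the general case'' is the whole difficulty. Your guessed mechanism for it, absorbing the $R_{j\rho_j}$ correction into $\mathcal{L}_{\rho_j}^{c_{\rho_j}}$ via $c_{\rho_j}\ge c_j$, is not what happens. (Also, the removable-root relation should read $H(\Psi;\eta)=H(\Psi\setminus\{\alpha\};\eta)+H(\Psi;\eta+\epsilon_i-\epsilon_j)$.)

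Two ideas are missing.

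First, the correction term has the same shape as what you started with. For a removable root $(v,d)$,
$$z_v^{-1}P_\Psi=z_v^{-1}P_{\Psi\setminus\{(v,d)\}}+z_d^{-1}P_\Psi.$$
So lowering at $v$ equals $F_{\gamma-\epsilon_v,c}$ plus lowering at $d=\rho_v(\gamma)$, with the same ideal and the same $\mathcal{L}$-factors. Iterating along the bounce path gives
\[
F_{\gamma,c}=F_{\gamma,c-\epsilon_b}+u\sum_{v\in\mathrm{Path}_\gamma(b)}F_{\gamma-\epsilon_v,c},
\]
with all coefficients $+1$. This is usable only if every $\gamma-\epsilon_v$ lies in $\widetilde{\mathrm{Par}}^k_\ell$, i.e.\ the path is safe ($\gamma_v\ge\gamma_{v+1}$ at its vertices).

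Second, for non-partition $\gamma$ safety can fail, so you cannot postpone straightening to $c=0$. You need straightening in the presence of $\prod_j\mathcal{L}_j^{c_j}$. The antisymmetry argument behind it works exactly when this factor is $\tau_{b-1}$-symmetric, i.e.\ when $c_{b-1}=c_b$.

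The paper chooses $b$ as the rightmost ascent of $\gamma$, moved left along bounce edges $a\to b$ with $\gamma_{a-1}=\gamma_a-1$. This guarantees $\gamma_{b-1}=\gamma_b-1$, a safe $\mathrm{Path}_\gamma(b)$, and the hypotheses of Proposition~\ref{prop:straight}. Then there are two cases:
\begin{itemize}
\item[(a)] if $c_{b-1}=c_b$, one straightens to $\gamma+\epsilon_a-\epsilon_b$ (or to $0$), and $N(\gamma)$ decreases;
\item[(b)] if $c_{b-1}<c_b$, one peels at $b$.
\end{itemize}
This dichotomy between straightening and peeling at the same index is what lets the monotonicity of $c$ do its work, and it is absent from your plan.
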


Indeed, Theorem   \ref{thm:main} is stronger than  Theorem \ref{thm:LSS}.  In the remainder of this section, we show that Theorem \ref{thm:main} implies  Theorem \ref{thm:LSS}.  To this end, we introduce  two linear operators $\mathbb H$ and $\mathbb K$ from the ring $\mathbb{Z}[z_1^{\pm1},\ldots,z_\ell^{\pm1}]$  of Laurent polynomials in $z_1,\ldots, z_\ell$ to the ring $\Lambda$ of symmetric functions. Precisely, the linear operator $\mathbb H$ (resp., $\mathbb K$) is determined by $\mathbb H(z^\eta)=h_\eta$ (resp., $\mathbb K(z^\eta)=k_\eta$) for $\eta\in \mathbb{Z}^\ell$. 
Noticing  that 
\begin{align*}
k_m^{(r)}=\sum_{d=0}^{m}\binom{r+d-1}{d}h_{m-d}
=\sum_{d=0}^{m}\binom{r+d-1}{d} L_1^d\,h_{m}=(1-L_1)^{-r}\,h_m, 
\end{align*}
these two operators are connected via the following relation:
\begin{equation}\label{agn-0-1}
    \mathbb K(z^\eta)=k_{\eta_1}^{(0)}\,k_{\eta_2}^{(1)}\cdots
 k_{\eta_\ell}^{(\ell-1)}=\mathbb H\left(\prod_{j=1}^\ell \left(1-\frac{1}{z_j}\right)^{-(j-1)} z^\eta\right).
\end{equation}

\begin{remark}
 On the right-hand side of   \eqref{agn-0-1}, 
 \begin{equation}\label{sti-rmk}
 \prod_{j=1}^\ell \left(1-\frac{1}{z_j}\right)^{-(j-1)} z^\eta
 \end{equation}
 is generally  not a Laurent polynomial. However, since $\mathbb H(z^\theta)=0$ for $\theta\not\in \mathbb{Z}_{\geq 0}^\ell$,  there are only  finitely many Laurent monomials in \eqref{sti-rmk} that can contribute. We will not elaborate further on this when such situations arise later.
\end{remark}

According to  the definitions in \eqref{Cat-1} and \eqref{xbb-0}, it is immediate  that 
\begin{equation}\label{Cat-1-1}
 H(\Psi;\eta)=\mathbb{H}\left(
\prod_{(i,j)\in\Delta^+_\ell\setminus\Psi} \left(1-\frac{z_i}{z_j}\right) z^\eta\right)
\end{equation}
and 
\begin{equation}\label{xbb-0-1}
 K(\Psi;M;\eta)=
\mathbb{K}\left( \prod_{j\in M}\left(1-\frac{1}{z_j}\right)
       \prod_{(i,j)\in\Delta^+_\ell\setminus\Psi}\left(1-\frac{z_i}{z_j}\right)\,z^\eta\right).
\end{equation}

We now deduce Theorem \ref{thm:LSS} from  Theorem   \ref{thm:main}.

\begin{proof}[Proof of  Theorem \ref{thm:LSS}.]
 Let $\lambda\in \mathrm{Par}_\ell^k$.
Suppose that  $m_j$ is the number of roots of $\Delta^{k+1}(\lambda)$ in column $j$. 
Using the notation in \eqref{xb-9-xn},
\[
M(\Delta^{k+1}(\lambda))=\{1^{m_1},2^{m_2},\ldots, \ell^{m_\ell}\}.
\]
Set 
\[
c_j=j-1-m_j.
\]
By Remark \ref{rek-0-8}, we see that 
\[
 c_j=\#\{i<j\colon k+1-\lambda_i+i\geq j\}.
\]
Clearly $c_1=0$.  Moreover, for $1\leq j<\ell$,
\begin{equation}\label{nbj=0-9}
 c_{j+1}-c_j
 =1-\#\{i<j\colon k+1-\lambda_i+i=j\}.
\end{equation}
Since the values   $\lambda_i-i$ are strictly decreasing,   the set on the right-hand side of  \eqref{nbj=0-9} has at most one element. So we have \[0\le c_1\le\cdots\le c_\ell.\]

Combining \eqref{Cat-1-1} and \eqref{xbb-0-1} and together with \eqref{agn-0-1}, we deduce that  
\begin{align*}
 g^{(k)}_\lambda&=K(\Delta^k(\lambda);\Delta^{k+1}(\lambda);\lambda)\\
 &=\mathbb K\left(\prod_{j=1}^\ell\left(1-\frac{1}{z_j}\right)^{m_j}
       \prod_{(i,j)\in\Delta^+_\ell\setminus\Delta^k(\lambda)}\left(1-\frac{z_i}{z_j}\right)\, z^\lambda\right)\\
  &=\mathbb H\left(\prod_{j=1}^\ell\left(1-\frac{1}{z_j}\right)^{-(j-1)} \prod_{j=1}^\ell\left(1-\frac{1}{z_j}\right)^{m_j}
       \prod_{(i,j)\in\Delta^+_\ell\setminus\Delta^k(\lambda)}\left(1-\frac{z_i}{z_j}\right)  z^\lambda\right)\\  
       &=\mathbb H\left(\prod_{j=1}^\ell\left(1-\frac{1}{z_j}\right)^{-c_j}
       \prod_{(i,j)\in\Delta^+_\ell\setminus\Delta^k(\lambda)}\left(1-\frac{z_i}{z_j}\right) z^\lambda\right)\\ 
&= \prod_{j=1}^{\ell}(1-L_j)^{-c_j}
 H(\Delta^k(\lambda);\lambda)=F_{\lambda,c}(1).
\end{align*}
Invoking  Theorem \ref{thm:main}, we conclude  that $g^{(k)}_\lambda$ is $k$-Schur positive.
\end{proof}

\section{Proof of Theorem \ref{thm:main}}\label{se-0-87}

In this section, we provide a proof of Theorem \ref{thm:main}. 
Let us  start with some notation that will be used later. 
For a root ideal $\Psi\subseteq\Delta^+_{\ell}$, denote 
\[
P_\Psi(z)=\prod_{(i,j)\in\Delta^+_{\ell}\setminus\Psi}
          \left(1-\frac{z_i}{z_j}\right).
\]
Recall  from  \eqref{Cat-1-1} that  for $\eta\in \mathbb{Z}^\ell$,
\[ H(\Psi;\eta)=\mathbb H \bigl(P_\Psi(z)z^\eta\bigr).
\]

Suppose that 
\[
Q(u;z)\in \mathbb{Z}[z_1^{\pm1},\ldots,z_\ell^{\pm1}][[u]]
\]
is a formal power series in $u$ whose coefficients are Laurent polynomials in $z_1,\ldots, z_\ell$.  Define
\begin{equation}\label{def-0-9}
 \cE_Q(\Psi;\eta)=\mathbb H \bigl(Q(u;z)P_\Psi(z)z^\eta\bigr)\in \Lambda[[u]].
\end{equation} 
In this notation, for $\gamma\in\widetilde{\mathrm{Par}}_\ell^k$ and
$c=(c_1,\ldots,c_\ell)\in \mathbb{Z}_{\geq 0}^\ell$, \eqref{eq:Fintro} can be rewritten as 
\[
 F_{\gamma,c}(u)=\cE_{Q_c}(\Delta^k(\gamma);\gamma),
\]
where 
\begin{equation}\label{qc-k-l}
Q_c(u;z)=\prod_{j=1}^{\ell}\left(1-\frac{u}{z_j}\right)^{-c_j}.
\end{equation}

\subsection{Properties  concerning $\cE_Q(\Delta^k(\gamma);\gamma)$ for $\gamma\in\widetilde{\mathrm{Par}}_\ell^k$}

Throughout this subsection, we always assume that  $\Psi\subseteq\Delta^+_{\ell}$ is a root ideal, and that 
\[Q(u;z)\in \mathbb{Z}[z_1^{\pm1},\ldots,z_\ell^{\pm1}][[u]].\]
We shall prove several   properties of $\cE_Q(\Delta^k(\gamma);\gamma)$ for $\gamma\in\widetilde{\mathrm{Par}}_\ell^k$, which are largely inspired by results from   \cite{BMPS,BMS}.

Write $[\ell]=\{1,\ldots, \ell\}$. 
For $1\leq i\leq \ell-1$, denote  by $\tau_i$   the simple reflection on $[\ell]$ exchanging $i$ and $i+1$, which acts on a Laurent  monomial  $z^\eta$ ($\eta\in \mathbb Z^\ell$) by exchanging   $z_i$ and $z_{i+1}$.  Moreover, for   $(a,b)\in [\ell]\times [\ell]$, define  $\tau_i (a,b)=(\tau_i(a), \tau_i(b))$. For a subset $\Theta\subseteq[\ell]\times [\ell]$, let 
\[\tau_i\, \Theta=\{\tau_i (a,b)\colon (a,b)\in \Theta\}\subseteq[\ell]\times [\ell].\]

The first  lemma can be regarded as an Laurent extension of \cite[Lemma 6.2]{BMPS}. 

\begin{lemma}\label{lem:mirror}
Let $1\leq i\leq \ell-1$.  Suppose that both  $\Psi$ and $Q(u;z)$  are invariant under  the action of $\tau_i$. For $\eta\in \mathbb Z^\ell$ with $\eta_i=\eta_{i+1}-1$, we have 
\[
 \cE_Q(\Psi;\eta)=0.
\]
\end{lemma}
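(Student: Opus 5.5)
The plan is to exploit the straightening relation for complete homogeneous symmetric functions, reformulated as a linear identity for $\mathbb H$, and then use the $\tau_i$-symmetry to pair terms. The basic fact I will use is the following. For $\theta\in\mathbb Z^\ell$ with $\theta_{i+1}=\theta_i+1$, we have $h_{\theta_i}h_{\theta_{i+1}}=h_{\theta_{i+1}}h_{\theta_i}$. More usefully, since $h_\eta$ is a commutative product,
\[
\mathbb H(z^\theta)=\mathbb H(\tau_i z^\theta)\quad\text{for all }\theta.
\]
Hence
\[
\mathbb H\bigl(f(z)\bigr)=\mathbb H\bigl(\tau_i f(z)\bigr)
\]
for any (formal) Laurent series $f$, coefficientwise in $u$.

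The key identity to aim for is
\[
\mathbb H\Bigl(\bigl(1-\tfrac{z_i}{z_{i+1}}\bigr) z^{\theta}\Bigr)=0\quad\text{whenever }\theta_{i+1}=\theta_i+1.
\]
Indeed, it equals $h_\theta-h_{\theta+\epsilon_i-\epsilon_{i+1}}$, and $\theta+\epsilon_i-\epsilon_{i+1}=\tau_i\theta$, so the difference vanishes by commutativity. More generally I will prove the twisted version:
\[
\mathbb H\Bigl(g(z)\bigl(1-\tfrac{z_i}{z_{i+1}}\bigr)z^\eta\Bigr)=0\quad\text{whenever } g \text{ is } \tau_i\text{-invariant and }\eta_{i+1}=\eta_i+1.
\]
To see this, write $m=z^\eta$ and note that $\tau_i(z^\eta)=z^\eta\cdot z_i/z_{i+1}$, because $\eta_{i+1}-\eta_i=1$. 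Then
\[
g\cdot\bigl(1-\tfrac{z_i}{z_{i+1}}\bigr)z^\eta=g\,z^\eta-g\,\tau_i(z^\eta)=g\,z^\eta-\tau_i(g\,z^\eta),
\]
and applying $\mathbb H$ together with its $\tau_i$-invariance gives zero.

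It then remains to factor $Q\,P_\Psi\,z^\eta$ in this form. I split $\Delta^+_\ell\setminus\Psi$ according to whether a root involves $i$ or $i+1$.
\begin{itemize}
\item Since $\Psi$ is $\tau_i$-invariant and $\tau_i(i,i+1)=(i+1,i)\notin\Delta^+_\ell$, the root $(i,i+1)$ cannot lie in $\Psi$. So the factor $\bigl(1-\frac{z_i}{z_{i+1}}\bigr)$ appears in $P_\Psi$.
\item The remaining roots of the complement come in $\tau_i$-orbits: $\{(a,i),(a,i+1)\}$ for $a<i$, $\{(i,b),(i+1,b)\}$ for $b>i+1$, and singletons not touching $i,i+1$. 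These orbits lie in or out of $\Psi$ together.
\end{itemize}
Thus the product $g_0$ of the remaining factors is $\tau_i$-invariant. Setting $g=Q\,g_0$, which is $\tau_i$-invariant since $Q$ is, and applying the identity coefficientwise in $u$ proves the lemma.

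The only delicate point is that $Q$ is a power series in $u$ with Laurent polynomial coefficients, and (in later applications) possibly infinite sums like $(1-u/z_j)^{-c}$. Working coefficient by coefficient in $u$ keeps everything a genuine Laurent polynomial, so $\mathbb H$ is well defined and commutes with the manipulations above. I expect no real obstacle beyond verifying the orbit bookkeeping for $\tau_i$-invariance of $g_0$.
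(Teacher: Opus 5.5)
Your proof is correct and is essentially the paper's argument: you factor $P_\Psi=\bigl(1-\frac{z_i}{z_{i+1}}\bigr)C(z)$ with $C$ invariant under $\tau_i$, and then use $\mathbb H\circ\tau_i=\mathbb H$. The only difference is phrasing: the paper observes that $\tau_i$ negates $QP_\Psi z^\eta$, so $\cE_Q(\Psi;\eta)=-\cE_Q(\Psi;\eta)$, while you write the integrand as $g\,z^\eta-\tau_i(g\,z^\eta)$, which is the same observation.
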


\begin{proof}
Since  $\tau_i\,\Psi=\Psi$, we see that $(i,i+1)\notin\Psi$ and
\[
 P_\Psi(z)=\left(1-\frac{z_i}{z_{i+1}}\right)C(z),
\]
where $C(z)$ is invariant under $\tau_i$. 
Along  with the condition $\eta_i=\eta_{i+1}-1$, it can be directly checked that
\[
 \tau_i \left(QP_\Psi z^\eta\right)
 =-QP_\Psi z^\eta.
\]
Noticing that  $\mathbb H\circ\tau_i=\mathbb H$, we deduce  $ \cE_Q(\Psi;\eta)=- \cE_Q(\Psi;\eta)$, and thus $ \cE_Q(\Psi;\eta)=0$.  
\end{proof}

A root $\alpha\in\Psi$ is \emph{removable} if $\Psi\setminus\{\alpha\}$ is   a root ideal, and a root $\alpha\notin\Psi$ is \emph{addable} if $\Psi\cup\{\alpha\}$ is a root ideal.  Using  Remark \ref{rek-0-8}, for $\gamma\in\widetilde{\mathrm{Par}}_\ell^k$, a root $(a,b)\in \Delta^k(\gamma)$ is removable  precisely when
\[
 b=\rho_a(\gamma)\le\ell,\qquad \rho_a(\gamma)<\rho_{a+1}(\gamma).
\]


Using  Lemma \ref{lem:mirror}, we can prove the following proposition  which  is a   Laurent version  of the local \(k\)-Schur straightening rule in \cite[Lemma 7.4]{BMPS}.

\begin{proposition}\label{prop:straight}
Let $\gamma\in\widetilde{\mathrm{Par}}_\ell^k$,  and let $2\le b\le\ell$ be such that $ \gamma_{b-1}=\gamma_b-1$ and $\gamma_b\ge\gamma_{b+1}$ (where we set $\gamma_{\ell+1}=-\infty$).   Suppose that $Q$ is invariant under $\tau_{b-1}$. 
If there exists a root $(a,b)$ that is removable  from  $\Delta^k(\gamma)$   and
\(\gamma_{a-1}\ge\gamma_a 
\) (where we set $\gamma_{0}=+\infty$), 
then  $\mu=\gamma+\epsilon_a-\epsilon_b\in \widetilde{\mathrm{Par}}_\ell^k$ and 
\begin{equation}\label{equ-m-0}
\cE_Q(\Delta^k(\gamma);\gamma)=\cE_Q(\Delta^k(\mu);\mu).
\end{equation}
If there is no removable root in column $b$, then 
\begin{equation}\label{cjn-eq-0}
\cE_Q(\Delta^k(\gamma);\gamma)=0.
\end{equation}
\end{proposition}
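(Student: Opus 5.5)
We mirror the proof of \cite[Lemma 7.4]{BMPS}, with $P_\Psi(z)z^\eta$ replaced by $Q\,P_\Psi(z)z^\eta$ throughout. The only tools are the linearity of $\mathbb H$ and Lemma \ref{lem:mirror}. The basic identity comes from splitting one factor of the product. Let $\Psi=\Delta^k(\gamma)$ and let $\alpha=(a,b)$ be removable, so that $\Psi'=\Psi\setminus\{\alpha\}$ is a root ideal. Then $P_{\Psi'}=(1-z_a/z_b)P_\Psi$, hence
\[
\cE_Q(\Psi';\gamma)=\cE_Q(\Psi;\gamma)-\cE_Q(\Psi;\gamma+\epsilon_a-\epsilon_b),
\]
that is,
\[
\cE_Q(\Psi;\gamma)=\cE_Q(\Psi';\gamma)+\cE_Q(\Psi;\mu).
\]
The plan is to show that $\cE_Q(\Psi';\gamma)=0$ and $\Delta^k(\mu)=\Psi$. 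Together these give \eqref{equ-m-0}.

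\textbf{Step 1: $\mu\in\widetilde{\mathrm{Par}}^k_\ell$ and $\Delta^k(\mu)=\Delta^k(\gamma)$.}
Since $(a,b)$ is removable, Remark \ref{rek-0-8} gives $b=\rho_a(\gamma)$ and $\rho_a(\gamma)<\rho_{a+1}(\gamma)$, the latter meaning $\gamma_{a+1}\le\gamma_a$. Adding $1$ in position $a$ and subtracting $1$ in position $b$ keeps $\mu\in\widetilde{\mathrm{Par}}^k_\ell$ for the following reasons.
\begin{itemize}
\item $\mu_a\le k$, because $b>a$ forces $\gamma_a<k+1$.
\item $\mu_a-\mu_{a-1}\le 1$, because $\gamma_{a-1}\ge\gamma_a$.
\item $\mu_{a+1}-\mu_a\le 1$, because $\gamma_{a+1}\le\gamma_a$. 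If $a+1=b$, the needed inequality is $\gamma_b-1-\gamma_a-1\le 1$, which holds.
\item $\mu_b-\mu_{b-1}\le 1$ trivially; if $b-1=a$, use $\gamma_{b-1}=\gamma_b-1$.
\item $\mu_{b+1}-\mu_b\le 1$, because $\gamma_b\ge\gamma_{b+1}$.
\end{itemize}
As for the ideal: $\rho_a$ drops by one and $\rho_b$ rises by one, and the other $\rho_i$ are unchanged. So $\Delta^k(\mu)$ differs from $\Psi$ by adding $(a,b-1)$ and removing $(b,j)$ with $j=\rho_b(\gamma)$. The hypothesis $\gamma_{b-1}=\gamma_b-1$ gives $\rho_{b-1}(\gamma)=\rho_b(\gamma)-\ldots$; I expect a bookkeeping check to show this is off. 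It is more likely that $\Delta^k(\mu)=\Psi$ fails literally, and what the argument really needs is $\cE_Q(\Delta^k(\mu);\mu)=\cE_Q(\Psi;\mu)$. I would prove that by a further application of the same splitting identity, adding or removing the differing roots and killing the correction terms with Lemma \ref{lem:mirror}, as in \cite{BMPS}.

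\textbf{Step 2: vanishing of $\cE_Q(\Psi';\gamma)$ via Lemma \ref{lem:mirror} with $i=b-1$.}
We have $\gamma_{b-1}=\gamma_b-1$ and $Q$ is $\tau_{b-1}$-invariant, so it remains to check that $\Psi'$ is $\tau_{b-1}$-invariant. By Remark \ref{rek-0-8}, $\rho_{b-1}(\gamma)=\rho_b(\gamma)$, so rows $b-1$ and $b$ of $\Psi$ start in the same column. Removing the removable root in column $b$ makes columns $b-1$ and $b$ of $\Psi'$ agree for rows above $b-1$. Together with the equality of rows $b-1$ and $b$, this gives $\tau_{b-1}$-invariance (and in particular $(b-1,b)\notin\Psi'$).

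\textbf{Step 3: the case with no removable root in column $b$.}
Here I would show that $\Psi$ itself is $\tau_{b-1}$-invariant. Rows $b-1$ and $b$ agree as before. Columns $b-1$ and $b$ agree because column $b$ has no removable root, so no row starts exactly at column $b$; since the $\rho_i$ are weakly increasing, column $b$ has the same rows as column $b-1$. Lemma \ref{lem:mirror} then gives \eqref{cjn-eq-0}.

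The main obstacle is the precise comparison of $\Delta^k(\mu)$ with $\Psi$ in Step 1, including the edge cases $a=b-1$ and $\rho_b>\ell$. If the two ideals genuinely differ, additional mirror-lemma cancellations will be needed there.
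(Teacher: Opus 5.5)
There is a genuine gap: the proposal does not prove \eqref{equ-m-0}. Your strategy (peel off one factor of $P_\Psi$ at a time and kill correction terms with Lemma \ref{lem:mirror}) is the paper's. Your Steps 2 and 3 are its vanishing arguments for $\cE_Q(\Delta^k(\gamma)\setminus\{(a,b)\};\gamma)$ and for \eqref{cjn-eq-0}. After your first splitting, however, everything rests on $\cE_Q(\Psi;\mu)=\cE_Q(\Delta^k(\mu);\mu)$. You defer this to ``a further application of the same splitting identity'', but it is the heart of the straightening and it is not automatic.

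You correctly found $\Delta^k(\mu)=(\Psi\cup\{(a,b-1)\})\setminus\{(b,d)\}$ with $d=\rho_b(\gamma)$, with no removal if $d>\ell$; note the hypothesis gives exactly $\rho_{b-1}(\gamma)=\rho_b(\gamma)=d$. The missing idea is to route through the intermediate ideal $\widehat\Psi=\Psi\cup\{(a,b-1)\}$, adding before removing.
\begin{itemize}
\item $\widehat\Psi$ is a root ideal, because $(a,b)\in\Psi$ and $(a-1,b-1)\in\Psi$. The latter is where $\gamma_{a-1}\ge\gamma_a$ is needed: it gives $\rho_{a-1}(\gamma)<\rho_a(\gamma)=b$.
\item $\widehat\Psi$ is $\tau_{b-1}$-invariant. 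Its columns $b-1$ and $b$ both consist of rows $1,\dots,a$, and its rows $b-1$ and $b$ both begin at column $d$ (or are both empty).
\end{itemize}
Then
\begin{align*}
\cE_Q(\Psi;\mu)&=\cE_Q(\widehat\Psi;\mu)-\cE_Q(\widehat\Psi;\mu+\epsilon_a-\epsilon_{b-1}),\\
\cE_Q(\Delta^k(\mu);\mu)&=\cE_Q(\widehat\Psi;\mu)-\one_{d\le\ell}\,\cE_Q(\widehat\Psi;\mu+\epsilon_b-\epsilon_d).
\end{align*}
In positions $b-1,b$ the two correction weights have entries $(\gamma_{b-1}-1,\gamma_{b-1})$ and $(\gamma_{b-1},\gamma_{b-1}+1)$, so both terms vanish by Lemma \ref{lem:mirror}. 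This is exactly the paper's three-term identity. The order matters. If you remove $(b,d)$ first (when $d\le\ell$), the correction terms are taken against $\Psi$ and then against $\Delta^k(\mu)$. Neither of these is $\tau_{b-1}$-invariant, so the mirror lemma does not apply to them.

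There are also two smaller errors.
\begin{itemize}
\item In Step 1, $\mu_a\le k$ does not follow from $b>a$. Since $\gamma_a=k+1+a-b$, you need $b\ge a+2$. This holds because $a=b-1$ cannot occur: removability of $(b-1,b)$ would force $\rho_{b-1}(\gamma)=b$, whereas $\rho_{b-1}(\gamma)=\rho_b(\gamma)>b$. So your separate case $a+1=b$ is vacuous. The fact $a\le b-2$ is also what gives $(\mu_{b-1},\mu_b)=(\gamma_{b-1},\gamma_{b-1})$, which is used above.
\item In Step 2, the claim that columns $b-1$ and $b$ of $\Psi\setminus\{(a,b)\}$ agree silently uses $(a-1,b-1)\in\Psi$, i.e.\ $\gamma_{a-1}\ge\gamma_a$. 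Without that hypothesis the claim is false, so you should say where it enters.
\end{itemize}
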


\begin{proof}
Assume first that there exists a removable root $(a,b)$  of $\Delta^k(\gamma)$ and \(\gamma_{a-1}\ge\gamma_a
\). Note that $a\neq b-1$, since otherwise we would have   $\rho_a(\gamma)=\rho_b(\gamma)>b$ from $ \gamma_{b-1}=\gamma_b-1$, contrary to the assumption that $(a,b)$ is removable. Now we have $a\leq b-2$. To show  $\mu\in\widetilde{\mathrm{Par}}_\ell^k$, it suffices to check  that  \(\mu_a\leq k,\ \mu_{a}- \mu_{a-1}\leq 1 \text{  and } \mu_{b+1}- \mu_{b}\leq 1\), which follows respectively  from $a+2\leq b=\rho_a(\gamma)=k+1-\gamma_a+a$, \(\gamma_{a-1}\ge\gamma_a
\) and $\gamma_b\ge\gamma_{b+1}$. 
This verifies  $\mu\in\widetilde{\mathrm{Par}}_\ell^k$. 

For simplicity, write 
\[
 \delta=(a,b),\ \alpha=(a,b-1),\ d=\rho_b(\gamma).
\]
The assumption \(\gamma_{a-1}\ge\gamma_a
\) implies  that $\rho_{a-1}(\gamma)\leq \rho_{a}(\gamma)-1=b-1$, and so  $(a-1,b-1)\in \Delta^k(\gamma)$.  Together with $(a,b)\in \Delta^k(\gamma)$, we obtain that 
the root $\alpha$ is addable to $\Delta^k(\gamma)$. Since $\gamma_b\ge\gamma_{b+1}$, we have 
$\rho_{b}(\gamma)<\rho_{b+1}(\gamma)$, and so if $d\le\ell$, then the root $\beta=(b,d)\in \Delta^k(\gamma)$ is removable.  Therefore, it follows from   Remark \ref{rek-0-8} that
\begin{equation*}
\Delta^k(\mu)=
 \begin{cases}
  (\Delta^k(\gamma)\cup\{\alpha\})\setminus\{\beta\},&d\le\ell,\\[5pt]
  \Delta^k(\gamma)\cup\{\alpha\},&d>\ell.
 \end{cases}
\end{equation*}
Now, it is routine to check that 
\begin{align}
&\cE_Q(\Delta^k(\gamma);\gamma)-\cE_Q(\Delta^k(\mu);\mu)\notag\\
 &\quad=
 \cE_Q(\Delta^k(\gamma)\setminus\{\delta\};\gamma)
 -\cE_Q(\Delta^k(\gamma)\cup\{\alpha\};
       \mu+\epsilon_a-\epsilon_{b-1}) \label{eq:threeerrors}\\
 &\qquad\quad+
 \one_{d\le\ell}\cdot 
 \cE_Q(\Delta^k(\gamma)\cup\{\alpha\};
       \mu+\epsilon_b-\epsilon_d),\notag
\end{align}
where $\one_{d\le\ell}$ is the indicator function with value  $1$ when $d\le\ell$, and $0$ otherwise.

We apply  Lemma \ref{lem:mirror} to show that every term on the right-hand side of \eqref{eq:threeerrors} vanishes. To do this, we have the following analysis.
\begin{itemize}
    \item[(1)] First, the assumption $ \gamma_{b-1}=\gamma_b-1$ implies that $\rho_b(\gamma)=\rho_{b-1}(\gamma)$. Second, notice  that   $(i,b-1)\in \Delta^k(\gamma)$ iff $1\leq i\leq a-1$ because  $(a-1,b-1)\in \Delta^k(\gamma)$ and  $\alpha=(a,b-1)\notin \Delta^k(\gamma)$, and that
$(i,b) \in \Delta^k(\gamma) $ iff $1\leq i\leq a$ since $\delta=(a,b)$ is removable.
These two observations (which are in fact the `ceiling' and `wall' conditions given in \cite[Definition 5.4]{BMPS}) imply that  both of the root ideals $\Delta^k(\gamma)\setminus\{\delta\}$ and
$\Delta^k(\gamma)\cup\{\alpha\}$ are invariant under $\tau_{b-1}$. 

    \item[(2)] The   entries at the positions $b-1,b$ 
of $\gamma$, $\mu+\epsilon_a-\epsilon_{b-1}$ and $\mu+\epsilon_b-\epsilon_d$    are 
\(
 (\gamma_{b-1},\gamma_{b-1}+1),\ (\gamma_{b-1}-1,\gamma_{b-1}),\ (\gamma_{b-1},\gamma_{b-1}+1)
\), respectively.
\end{itemize}
Therefore, each term satisfies the conditions required in   Lemma \ref{lem:mirror}, and thus  is equal to zero. This proves 
\eqref{equ-m-0}. 

It remains to consider the case when  there is no $a$ such that  $(a,b)$ is  removable from $\Delta^{k}(\gamma)$. In this case,   note that   $(i,b) \in \Delta^k(\gamma) $ iff $(i,b-1) \in \Delta^k(\gamma) $, and that $ \rho_{b-1}=\rho_b$ since  $ \gamma_{b-1}=\gamma_b-1$. This ensures that $\tau_{b-1}\,\Delta^k(\gamma)=\Delta^k(\gamma)$. Again, we may apply  Lemma \ref{lem:mirror} to yield \eqref{cjn-eq-0}.  So the proof is complete. 
\end{proof}

By drawing  a directed edge $a\to b$ for each removable root $(a,b)$ of $\Psi$, we get  a graph on $[\ell]$ which is called the bounce graph of $\Psi$ \cite[Definition 5.2]{BMPS}.  Clearly,  the bounce graph   is a disjoint union of directed paths,  called bounce paths \cite[Definition 5.2]{BMPS}. Now, consider the  bounce graph of the  root ideal $\Delta^k(\gamma)$ for $\gamma\in\widetilde{\mathrm{Par}}_\ell^k$. 
For $b\in [\ell]$, let $\mathrm{Path}_\gamma(b)$ be the longest directed path starting at $b$ (which is written in the notation  `downpath' in \cite[Page 938]{BMPS}).  We call the path $\mathrm{Path}_\gamma(b)$  \emph{safe} if  $\gamma_v\ge\gamma_{v+1}$ for every vertex $v\in\mathrm{Path}_\gamma(b)$ with   $v<\ell$.

The last lemma borrows   ideas from  \cite[Proposition 5.6, Corollary 5.7]{BMPS}.

\begin{lemma}\label{lem:path}
Let $\gamma\in\widetilde{\mathrm{Par}}_\ell^k$.  If the path $\mathrm{Path}_\gamma(b)$ is safe, then $\gamma-\epsilon_v\in\widetilde{\mathrm{Par}}_\ell^k$ for every vertex $v\in \mathrm{Path}_\gamma(b)$. 
Moreover, 
\begin{equation}\label{eq:path}
 \cE_{Qz_b^{-1}}(\Delta^k(\gamma);\gamma)=
 \sum_{v\in\mathrm{Path}_\gamma(b)}
 \cE_Q(\Delta^k(\gamma-\epsilon_v);\gamma-\epsilon_v).
\end{equation}
\end{lemma}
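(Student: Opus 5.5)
The plan is a direct induction along the bounce path, peeling off one vertex at a time with the factor $(1-z_b/z_c)$ that distinguishes $P_{\Psi\setminus\{(b,c)\}}$ from $P_\Psi$. Throughout write $\Psi=\Delta^k(\gamma)$.

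First I will settle membership. For $v\in\mathrm{Path}_\gamma(b)$, lowering $\gamma_v$ by one keeps $\gamma_v-1\le k$. It also keeps $(\gamma-\epsilon_v)_v-(\gamma-\epsilon_v)_{v-1}=\gamma_v-\gamma_{v-1}-1\le 0$. The only condition that could fail is at position $v+1$ (when $v<\ell$), where we need $\gamma_{v+1}-(\gamma_v-1)\le 1$, that is, $\gamma_v\ge\gamma_{v+1}$. This is exactly the safety hypothesis. So $\gamma-\epsilon_v\in\widetilde{\mathrm{Par}}_\ell^k$.

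Next I will describe $\Delta^k(\gamma-\epsilon_b)$. Passing from $\gamma$ to $\gamma-\epsilon_b$ changes only $\rho_b$, which increases by $1$. Hence only row $b$ can change, and it loses at most the root $(b,\rho_b(\gamma))$, exactly when $\rho_b(\gamma)\le\ell$. There are two cases.
\begin{itemize}
\item[(i)] If $\rho_b(\gamma)>\ell$, then $\Delta^k(\gamma-\epsilon_b)=\Psi$, and $b$ has no outgoing edge in the bounce graph, so $\mathrm{Path}_\gamma(b)=\{b\}$.
\item[(ii)] If $c:=\rho_b(\gamma)\le\ell$, then safety gives $\gamma_b\ge\gamma_{b+1}$, hence $\rho_b(\gamma)<\rho_{b+1}(\gamma)$. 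Therefore $(b,c)$ is removable, $b\to c$ is the first edge of $\mathrm{Path}_\gamma(b)$, and $\Delta^k(\gamma-\epsilon_b)=\Psi\setminus\{(b,c)\}$.
\end{itemize}
The point of safety here is to exclude the situation $\rho_b=\rho_{b+1}\le\ell$, where row $b$ would lose a non-removable root and the root ideal would not simply drop one removable root.

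The main computation handles case (ii). Since $P_{\Psi\setminus\{(b,c)\}}=(1-z_b/z_c)P_\Psi$, we have for any $\eta$
\[
\cE_Q(\Psi\setminus\{(b,c)\};\eta)=\cE_Q(\Psi;\eta)-\cE_Q(\Psi;\eta+\epsilon_b-\epsilon_c).
\]
Take $\eta=\gamma-\epsilon_b$ and use $\cE_{Qz_b^{-1}}(\Psi;\gamma)=\cE_Q(\Psi;\gamma-\epsilon_b)$. This gives
\[
\cE_{Qz_b^{-1}}(\Delta^k(\gamma);\gamma)=\cE_Q(\Delta^k(\gamma-\epsilon_b);\gamma-\epsilon_b)+\cE_{Qz_c^{-1}}(\Delta^k(\gamma);\gamma).
\]
In case (i), the left side is already equal to the single term $\cE_Q(\Delta^k(\gamma-\epsilon_b);\gamma-\epsilon_b)$.

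Finally I will conclude by induction on the length of $\mathrm{Path}_\gamma(b)$. The path $\mathrm{Path}_\gamma(c)$ is the tail of $\mathrm{Path}_\gamma(b)$ after $b$, so it is again safe. The induction hypothesis applied to the last term above then yields \eqref{eq:path}. No invariance of $Q$ is needed at any point. The only delicate step is the case analysis identifying $\Delta^k(\gamma-\epsilon_b)$, and safety is precisely what makes it work.
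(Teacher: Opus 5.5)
Your proposal is correct and follows essentially the same route as the paper: the same membership check, the same one-step identity coming from $P_{\Psi\setminus\{(b,c)\}}=(1-z_b/z_c)P_\Psi$, and iteration along the bounce path. Your case split on whether $\rho_b(\gamma)\le\ell$ also makes explicit a step the paper leaves implicit, namely that safety is what forces the terminal vertex of the path to have an empty row in $\Delta^k(\gamma)$.
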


\begin{proof}
The assertion that $\gamma-\epsilon_v\in\widetilde{\mathrm{Par}}_\ell^k$ is explained as follows. 
Lowering the $v$-th entry preserves all defining inequalities of $\widetilde{\mathrm{Par}}_\ell^k$ except possibly the one between $v$ and $v+1$; that inequality is also true since $\gamma_v\ge\gamma_{v+1}$.

We next verify the relation in  \eqref{eq:path}. 
If $v\to d$ is an edge, then, by Remark \ref{rek-0-8}, it is easily checked that $\Delta^k(\gamma-\epsilon_v)$ is obtained from $\Delta^k(\gamma)$ by deleting the removable root $(v,d)$, namely, 
\[
 \Delta^k(\gamma-\epsilon_v)=\Delta^k(\gamma)\setminus\{(v,d)\}.
\]
By the definition in \eqref{def-0-9}, it is readily checked that 
\begin{equation}\label{Tie-12}
 \cE_{Qz_v^{-1}}(\Delta^k(\gamma);\gamma)=
 \cE_Q(\Delta^k(\gamma-\epsilon_v);\gamma-\epsilon_v)
 +\cE_{Qz_d^{-1}}(\Delta^k(\gamma);\gamma).
\end{equation}
If $v$ is the last vertex of $\mathrm{Path}_\gamma(b)$, then the $v$-th row of $\Delta^k(\gamma)$ has no roots, and so $\Delta^k(\gamma-\epsilon_v)=\Delta^k(\gamma)$, implying that 
\begin{equation}\label{Tie-12-1}
 \cE_{Qz_v^{-1}}(\Delta^k(\gamma);\gamma)=
 \cE_Q(\Delta^k(\gamma-\epsilon_v);\gamma-\epsilon_v).
\end{equation}
Iterating   \eqref{Tie-12} from the first edge of  $\mathrm{Path}_\gamma(b)$ and combining with   \eqref{Tie-12-1},
we eventually arrive at \eqref{eq:path}, as desired.
\end{proof}

\subsection{Proof of   Theorem \ref{thm:main}}

We are now ready to give a proof of  Theorem \ref{thm:main}. 

\begin{proof}[Proof of Theorem~\ref{thm:main}]
 For simplicity, write
\[
 F^{[r]}_{\gamma,c}=[u^r]F_{\gamma,c}(u),
 \qquad |c|=c_1+\cdots+c_\ell.
\]
Keep in mind that 
\[
 F_{\gamma,c}(u)=\cE_{Q_c}(\Delta^k(\gamma);\gamma).
\]

If $\gamma_\ell<0$, then $F_{\gamma,c}(u)=0$.  Indeed, neither a raising operator nor a lowering operator can increase the last coordinate.

Assume now that  $\gamma_\ell\ge0$.  Since $\gamma\in\widetilde{\mathrm{Par}}_\ell^k$,
we have $\gamma_i\geq \gamma_\ell-(\ell-i)\ge-(\ell-i)$ for $i\in [\ell]$.  Hence
\[
 N(\gamma)=\sum_{i=1}^{\ell}i(\gamma_i+\ell-i)
\]
is a nonnegative integer.  We next complete the proof by using  lexicographic induction on
\(
 (r,|c|,N(\gamma)).
\)

First, consider $r=0$.  It follows from   \eqref{eq:Fintro} that \[F^{[0]}_{\gamma,c}=F_{\gamma,c}(0)=H(\Delta^k(\gamma);\gamma).\]  If $\gamma$ is a partition, then, according to the identification in  \eqref{cdj-0-kl}, we have $H(\Delta^k(\gamma);\gamma)=s^{(k)}_\gamma$. We next consider the case when $\gamma$ is not a partition. We need the following crucial observation. 

\begin{itemize}
    \item[Claim.] 
Let $\gamma\in\widetilde{\mathrm{Par}}_\ell^k$ with $\gamma_\ell\ge0$.  If $\gamma$ is not a partition, then there exists  an index $b\in\{2,\ldots,\ell\}$ such that
\begin{itemize}
 \item[(i)] $\gamma_{b-1}=\gamma_b-1$;
 
 \item[(ii)] The path $\mathrm{Path}_\gamma(b)$ in the bounce graph of  
 $\Delta^k(\gamma)$ is safe;
 
 \item[(iii)] If there exists an edge $a\to b$ where $ a>1$, then  $\gamma_{a-1}\ge\gamma_a$.
\end{itemize}

\end{itemize}

 \begin{proof}[Proof of Claim.]
Let $p$ be the rightmost ascent of $\gamma$, namely, the rightmost index such that  $\gamma_p<\gamma_{p+1}$. Initially set $b=p+1$.  Since there are no ascents at or to the right of $b$, the path $\mathrm{Path}_\gamma(b)$ is safe.

 Suppose that $a\to b$ is an edge in the bounce graph of  
 $\Delta^k(\gamma)$, where 
 $ a>1$. If $\gamma_{a-1}\not\ge\gamma_a$, then 
$\gamma_{a-1}=\gamma_a-1$. In this case, replace $b$ by $a$. Since $(a,b)$ is removable, by Remark \ref{rek-0-8}, we must have   $\gamma_a\ge\gamma_{a+1}$. Note that
\[
\mathrm{Path}_\gamma(a)=\{a\}\sqcup\mathrm{Path}_\gamma(b).
\]
Thus safety is preserved.  Each replacement strictly decreases $b$, so the procedure stops.  At the stopping index, all three requirements in (i), (ii) and (iii) are satisfied. 
\end{proof}

Choose an ascent $b$ as in the above Claim. Recall the power series $Q_c(u;z)$  in \eqref{qc-k-l}. When $u=0$,    $Q_c$ is identically equal to $1$.  Note that $\gamma$, $b$, $a$ and $Q_c$ satisfy all the conditions in 
Proposition \ref{prop:straight}. Therefore, $F_{\gamma,c}(0)=\cE_{Q_c}(\Delta^k(\gamma);\gamma)$ is either  zero or equal to $\cE_{Q_c}(\Delta^k(\mu);\mu)$ where
$\mu=\gamma+\epsilon_a-\epsilon_b$.  In the latter case, we compute that 
\[
 N(\mu)=N(\gamma)-(b-a)<N(\gamma).
\]
Since $(r, |c|, N(\mu))$ is less than $(r, |c|, N(\gamma))$, we may use induction to conclude that $F_{\gamma,c}(0)$  is $k$-Schur positive. This finishes the proof for the case when $r=0$. 

We next consider the case when $r>0$.  If $c=(0,\ldots, 0)$ is the zero vector, then the  coefficient $F^{[r]}_{\gamma,c}$ is zero, and thus we are done. 
It remains to verify the case when $c$ is not the zero  vector.

Suppose first that $\gamma$ is not a partition and choose $b$ as in the above Claim.  If $c_{b-1}=c_b$, then $Q_c$ is invariant under $\tau_{b-1}$.  Using Proposition \ref{prop:straight}, we may apply induction as in the $r=0$ case to conclude that $F^{[r]}_{\gamma,c}$  is $k$-Schur positive.
Next, consider the case when $c_b>c_{b-1}$.   Note that $c-\epsilon_b$ is again nonnegative and weakly increasing. Note also that by (ii) in the Claim, the path $\mathrm{Path}_\gamma(b)$ is safe.  So, the   identity
\[
 Q_c=Q_{c-\epsilon_b}+u z_b^{-1}Q_c
\]
together with Lemma \ref{lem:path} leads to the following  recursion
\begin{equation}\label{eq:recursion}
 F^{[r]}_{\gamma,c}=
 F^{[r]}_{\gamma,c-\epsilon_b}
 +\sum_{v\in\mathrm{Path}_\gamma(b)}
 F^{[r-1]}_{\gamma-\epsilon_v,c}.
\end{equation}
The first term lowers $|c|$, and every term in the sum lowers $r$.  Applying  induction to every summand yields that $ F^{[r]}_{\gamma,c}$ is $k$-Schur positive.

We finally consider the case when $\gamma$ is  a partition.
In this case, choose any $b\in [\ell]$ such that
\[
 c_b>c_{b-1},
\]
where we set $c_0:=0$.
Note that every path in the bounce graph of a partition is safe. Therefore, we may still invoke Lemma \ref{lem:path} and apply induction as above to conclude that $ F^{[r]}_{\gamma,c}$ is $k$-Schur positive. It should be pointed out that vector $\gamma-\epsilon_v$ in \eqref{eq:recursion}
may no longer be a partition; however, this does not affect our induction since we work in $\widetilde{\mathrm{Par}}_\ell^k$. 
This eventually completes the proof.  
\end{proof}

\section{Concluding remarks}\label{sec-f-9}

As mentioned in Remark \ref{rek-0-k}, 
Blasiak,   Morse  and Seelinger    \cite[Conjecture 2.19]{BMS} proposed   an extension of Theorem \ref{thm:LSS}. In this section,  we shall construct  a counterexample to this  conjectured extension.

For a root ideal \(\Psi \subseteq \Delta_\ell^+\) and 
$\eta\in\mathbb{Z}^\ell$, let
\[
\mathrm{maxband}(\Psi,\eta)
 =\max_{1\le i\le\ell}\bigl(\eta_i+\mathrm{nr}(\Psi)_i\bigr),
\]
where 
\[
 \mathrm{nr}(\Psi)_i
 =\#\{i+1\leq j\leq \ell\colon (i,j)\notin\Psi\}\]
Moreover, set 
\[
 \mathrm{RC}(\Psi)
 =\Psi\setminus\{\text{all removable roots of }\Psi\}.
\]
Let $\mathrm{RC}^0(\Psi)=\Psi$ and  for $a\geq 1$, recursively define 
\[\mathrm{RC}^a(\Psi)=\mathrm{RC}(\mathrm{RC}^{a-1}(\Psi)).\]

\begin{conjecture}[\text{\cite[Conjecture 2.19(2.17)]{BMS}}]\label{conj:BMS}
Let  \(\Psi \subseteq \Delta_\ell^+\) be a root ideal,  and  let \(\lambda \in \mathrm{Par}_\ell^k\) be   such that \(\operatorname{maxband}(\Psi, \lambda) \leq k\).
For 
\(a \geq 0\),
\begin{equation*}
    K(\Psi; \mathrm{RC}^a(\Psi); \lambda) \in\sum_{\nu\in\Par^k_\ell}\mathbb{Z}_{\ge0}\cdot s^{(k)}_\nu.
\end{equation*}

\end{conjecture}

Note that for  $\lambda\in\Par^k_\ell$ and    $a\ge0$, we have
\[\mathrm{RC}^a(\Delta^k(\lambda))=\Delta^{k+a}(\lambda),\]
which can be readily checked by using Remark \ref{rek-0-8}. So when we take  $\Psi=\Delta^{k}(\lambda)$ and $a=1$, Conjecture \ref{conj:BMS} reduces to  Theorem \ref{thm:LSS}.

We next give an example refuting Conjecture \ref{conj:BMS}. 
Consider 
\[
 \ell=8,
 \qquad
 \lambda=(1^8),
 \qquad
 k=3,
 \qquad a=1.
\]
Let  $\Psi\subseteq \Delta_8^+$ and  $\mathrm{RC}(\Psi)$ be the root ideals displayed in   Figure \ref{fig:Psi-RC-counterexample}, where the removable roots of $\Psi$ are colored red.    
\begin{figure}[h t]
\centering

\[
\begin{array}{c@{\qquad\qquad}c}
\begin{array}{c}
\BPD[5mm]{%
  \O\O
  \RemCell{1,3}
  \PsiCell{1,4}
  \PsiCell{1,5}
  \PsiCell{1,6}
  \PsiCell{1,7}
  \PsiCell{1,8}
  \\
  \O\O\O\O
  \RemCell{2,5}
  \PsiCell{2,6}
  \PsiCell{2,7}
  \PsiCell{2,8}
  \\
  \O\O\O\O\O
  \RemCell{3,6}
  \PsiCell{3,7}
  \PsiCell{3,8}
  \\
  \O\O\O\O\O\O
  \PsiCell{4,7}
  \PsiCell{4,8}
  \\
  \O\O\O\O\O\O
  \RemCell{5,7}
  \PsiCell{5,8}
  \\
  \O\O\O\O\O\O\O\O
  \\
  \O\O\O\O\O\O\O\O
  \\
  \O\O\O\O\O\O\O\O
}
\end{array}
&
\begin{array}{c}
\BPD[5mm]{%
  \O\O\O
  \PsiCell{1,4}
  \PsiCell{1,5}
  \PsiCell{1,6}
  \PsiCell{1,7}
  \PsiCell{1,8}
  \\
  \O\O\O\O\O
  \PsiCell{2,6}
  \PsiCell{2,7}
  \PsiCell{2,8}
  \\
  \O\O\O\O\O\O
  \PsiCell{3,7}
  \PsiCell{3,8}
  \\
  \O\O\O\O\O\O
  \PsiCell{4,7}
  \PsiCell{4,8}
  \\
  \O\O\O\O\O\O\O
  \PsiCell{5,8}
  \\
  \O\O\O\O\O\O\O\O
  \\
  \O\O\O\O\O\O\O\O
  \\
  \O\O\O\O\O\O\O\O
}
\end{array}
\end{array}
\]

\caption{
The root ideals $\Psi$ (left) and $\operatorname{RC}(\Psi)$ (right).
}
\label{fig:Psi-RC-counterexample}
\end{figure}

By definition, we see that
\[
 (\mathrm{nr}(\Psi)_1,\ldots, \mathrm{nr}(\Psi)_8)
 =(1,2,2,2,1,2,1,0)\]
 and  
 \[
 (\mathrm{nr}(\Psi)_1+\lambda_1,\ldots, \mathrm{nr}(\Psi)_8+\lambda_8)
 =(2,3,3,3,2,3,2,1).\]
So
\[
 \mathrm{maxband}(\Psi,1^8)=3,
\]
which satisfies the condition required in  Conjecture \ref{conj:BMS}.
By direct computation,   
\begin{align*}
K\bigl(\Psi;\operatorname{RC}(\Psi);1^8\bigr)
={}&
-s_{(2)}^{(3)}
-s_{(1,1)}^{(3)}
+2s_{(3)}^{(3)}
+s_{(2,1)}^{(3)}
-s_{(1,1,1)}^{(3)}
+7s_{(2,2)}^{(3)}
+9s_{(2,1,1)}^{(3)}
\\
&+6s_{(1^4)}^{(3)}
+12s_{(2,2,1)}^{(3)}
+16s_{(2,1^3)}^{(3)}
+15s_{(1^5)}^{(3)}
+4s_{(2,2,1,1)}^{(3)}
\\
&+14s_{(2,1^4)}^{(3)}
+14s_{(1^6)}^{(3)}
+4s_{(2,2,1^3)}^{(3)}
+6s_{(1^7)}^{(3)}
+s_{(1^8)}^{(3)},
\end{align*}
which is not $k$-Schur positive.


\begin{thebibliography}{99}


\bibitem{BMPS}
J.~Blasiak, J.~Morse, A.~Pun, and D.~Summers,
Catalan functions and $k$-Schur positivity,
J. Amer. Math. Soc. \textbf{32} (2019), 921--963.

\bibitem{BMS}
J.~Blasiak, J.~Morse, and G.~H. Seelinger,
$K$-theoretic Catalan functions,
Adv. Math. \textbf{404} (2022), Paper No.~108421.

\bibitem{Chen}
L.-C.~Chen,
Skew-linked partitions and a representation-theoretic model for
$k$-Schur functions,
Ph.D. thesis, University of California, Berkeley, 2010.

 

\bibitem{GKL}
P.L. Guo, M. Kang and  J. Liu, 
Hybrid Grothendieck polynomials, arXiv:2505.19072v1, 2025.


\bibitem{Lam}
T.~Lam,
Schubert polynomials for the affine Grassmannian,
J. Amer. Math. Soc. \textbf{21} (2008), 259--281.



\bibitem{LLMS1}
T.~Lam, L.~Lapointe, J.~Morse, and M.~Shimozono,
Affine insertion and Pieri rules for the affine Grassmannian,
Mem. Amer. Math. Soc. \textbf{208} (2010), no.~977, xii+82.


\bibitem{LP}
T. Lam and  P. Pylyavskyy, Combinatorial Hopf algebras and K-homology of Grassmannians, Int. Math. Res. Not. IMRN(2007), no. 24, Art. ID rnm125, 48 pp.

\bibitem{Survey}
T.~Lam, L.~Lapointe, J.~Morse, A.~Schilling, M.~Shimozono, and M.~Zabrocki,
$k$-Schur functions and affine Schubert calculus,
Fields Institute Monographs, vol.~33, Springer, New York, 2014.

\bibitem{LLM}
L.~Lapointe, A.~Lascoux, and J.~Morse,
Tableau atoms and a new Macdonald positivity conjecture,
Duke Math. J. \textbf{116} (2003), no.~1, 103--146.

\bibitem{LM}
L.~Lapointe and J.~Morse,
Tableaux on $(k+1)$-cores, reduced words for affine permutations, and
$k$-Schur expansions,
J. Combin. Theory Ser. A \textbf{112} (2005), no.~1, 44--81.


\bibitem{LM1}
L.~Lapointe and J.~Morse,
A $k$-tableau characterization of $k$-Schur functions,
Adv. Math. \textbf{213} (2007), no.~1, 183--204.



\bibitem{LSS}
T.~Lam, A.~Schilling, and M.~Shimozono,
$K$-theory Schubert calculus of the affine Grassmannian,
Compos. Math. \textbf{146} (2010), 811--852.

\bibitem{Morse}
J.~Morse,
Combinatorics of the $K$-theory of affine Grassmannians,
Adv. Math. \textbf{229} (2012), no.~5, 2950--2984.

\bibitem{Panyushev}
D. Panyushev,
Generalised Kostka-Foulkes polynomials and cohomology of line bundles on homogeneous vector bundles.
Selecta Math. (N.S.) 16 (2010), no. 2, 315–342.




\end{thebibliography}
\end{document}